
\documentclass[11pt]{amsart}

\usepackage[T1]{fontenc}
\usepackage[latin1]{inputenc}
\usepackage{amsmath,amssymb,amsthm}
\usepackage{enumitem}
\usepackage{amsrefs}
\usepackage{hyperref}

\usepackage{calc}
\makeatletter
\renewcommand{\tocsection}[3]{%
  \indentlabel{\ignorespaces#1 \makebox[\widthof{00.}][r]{#2\@ifnotempty{#2}{.}}\quad}#3}
\renewcommand{\tocsubsection}[3]{%
  \indentlabel{\@ifnotempty{#2}{\ignorespaces#1 \makebox[\widthof{00.0.}][r]{\small{#2.}}\,\,  }}\small{#3}}
\makeatother


\theoremstyle{plain}
\newtheorem{thm}{Theorem}[section]
\newtheorem{lem}[thm]{Lemma}

\theoremstyle{definition}

\newtheorem{rem}[thm]{Remark}
\newtheorem{exa}[thm]{Example}

\theoremstyle{remark}



\newcommand{\bB}{\mathbb B}
\newcommand{\bC}{\mathbb C}

\newcommand{\bF}{\mathbb F}

\newcommand{\bR}{\mathbb R}

\newcommand{\cA}{\mathcal A}
\newcommand{\cB}{\mathcal B}

\newcommand{\cH}{\mathcal H}

\newcommand{\cO}{\mathcal O}

\newcommand{\cS}{\mathcal S}

\newcommand{\cU}{\mathcal U}


\DeclareMathOperator{\ran}{ran}
\DeclareMathOperator{\Dim}{dim}

\DeclareMathOperator{\id}{id}

\DeclareMathOperator{\re}{Re}

\DeclareMathOperator{\Tr}{Tr}

\DeclareMathOperator{\spa}{span}



\newcommand{\IF}{\text{ if }}

\newcommand{\qand}{\quad\text{and}\quad}

\newcommand{\ol}[1]{\overline{#1}}
\newcommand{\ep}{\varepsilon}

\newcommand{\Cmin}{{C^*_\textup{min}}}
\newcommand{\ip}[1]{\langle #1 \rangle}
\newcommand{\mt}{\varnothing}

\author{Kenneth R. Davidson}
\address{Pure Mathematics Department, 
University of Waterloo,
Waterloo, ON\ N2L 3G1, 
Canada}
\email{krdavids@uwaterloo.ca}

\author{Michael Hartz}
\address{	Fachrichtung Mathematik\\
Universit\"at des Saarlandes\\
66123 Saarbr\"ucken, 
Germany}
\email{hartz@math.uni-sb.de}
\thanks{KRD was partially funded by NSERC (Canada).
MH was partially supported by the Emmy Noether Program of the German Research Foundation (DFG Grant 466012782).
}

\title{On boundary representations}

\subjclass[2010]{Primary: 46L07; Secondary: 47A20, 46E22, 47L55}
\keywords{Boundary representation, pure state, GNS representation}

\begin{document}

\begin{abstract}
Let $S$ be an operator system sitting in its C*-envelope $\Cmin(S)$.
Starting with a pure state on $S$, let $F$ be the face of state extensions to $\Cmin(S)$.
The dilation theorem of Davidson--Kennedy shows that the GNS representations corresponding to some of the extreme states of $F$ are boundary representations.
We construct an explicit example in which $F$ is an interval and only one of the two extreme points yields a boundary representation.
\end{abstract}

\maketitle

\section{Introduction}

If $S$ is an operator system generating a C*-algebra $\cA$, then an irreducible representation $\pi$ of $\cA$
is called a boundary representation if its restriction to $S$ has a unique completely positive extension to $\cA$, namely $\pi$.
These representations were defined by Arveson \cite{Arveson_subI} and were intended to play the role of points in the Choquet boundary of $S$.
It took a long time for this vision to be realized, depending of work of Hamana \cite{Hamana}, Agler \cite{Agler88}, Muhly--Solel \cite{MuhlySolel}, Dritschel--McCullough 
\cite{DritMcCull}, Arveson \cite{Arveson_choq} and the first author and Kennedy \cite{DavidsonKennedy_boundary}.

In particular, Hamana established the existence of the C*-envelope $\Cmin(S)$, and showed that every other C*-algebra generated by a completely isometric
unital copy of $S$ has a canonical quotient onto $\Cmin(S)$. 

In \cite{DavidsonKennedy_boundary}, the existence of (sufficiently many) boundary representations is established 
by starting with a pure u.c.p.\ map on $S$ (or $M_n(S)$) and dilating it to a maximal u.c.p.\ map, while maintaining purity at each step.
The work of Dritschel--McCullough \cite{DritMcCull} showed that maximal u.c.p.\ maps have the unique extension property,
while the purity ensures that the $*$-representation extending it is irreducible. Thus a boundary representation is obtained.

In particular, one can begin with a pure state $\phi$ on $S$ and consider the face $F$ of all states on $\Cmin(S)$, or any $C^*(S)$, which extend $\phi$. 
The extreme points of this face are pure, and thus generate an irreducible representation by the GNS construction.
The dilation theorem of the previous paragraph shows that some of these irreducible representations are boundary.

This paper began with an investigation of which of these irreducible representations are boundary.
It would have been remarkable had all of them had this property.
It turns out that this is not the case in general.
However the examples which we have found are non-trivial and require some work to establish.
That is the main purpose of this note.

\section{Some Observations}

We refer to Paulsen \cite{Paulsen} for background on operator systems, completely positive maps, and the basics of dilation theory.
We will use $S$ to represent an operator system, and there is no loss in assuming that $S$ is a unital self-adjoint closed subspace of 
the bounded operators, $\cB(H)$, on some Hilbert space $H$.
We will use $H,K$ for Hilbert spaces.

A unital completely positive (u.c.p.) map $\phi$ from an operator system $S$ into $\cB(K)$ is \emph{pure} if 
whenever $\psi$ is a completely positive map of $S$ into $\cB(K)$ such that $0 \le \psi \le \phi$, then $\psi = t\phi$ for some $t \in [0,1]$.
Call $\phi$ a \emph{state} of $S$ if $\Dim K = 1$, and let $\cS(S)$ denote the weak-$*$ compact convex set of all states on $S$.
It is routine to verify that a state $\phi$ is pure if and only if it is an extreme point of $\cS(S)$, which exist by the Krein-Milman theorem.
The same holds for states of $M_n(S)$.
A result of Hopenwasser  \cite{Hopenwasser} shows that a boundary representation of $M_n(S)$
has the form $\id_n \otimes \pi$ where $\pi$ is a boundary representation of $S$.
This was an observation of Kleski \cite{Kleski_email}.
It simplified the argument in \cite{DavidsonKennedy_boundary} that there are sufficiently many boundary representations of $S$
to ensure that the direct sum of all boundary representations is completely isometric on $S$,
and thus yields the C*-envelope as the C*-algebra of its image.

Arveson \cite{Arveson_choqII} calls an operator system \emph{hyper-rigid} if every representation has the unique extension property.
He provides a number of criteria for deciding if an operator system has this property.
Clearly, in this case, in the setting described in the introduction, every extreme point of $F$ gives rise to a boundary representation.
Likewise if $F$ is a singleton, this is the case.

\begin{exa} \label {Ex:cuntz}
Let $\cO_d$ for $2 \le d < \infty$ denote the Cuntz algebra, with standard generators $s_1,\dots, s_d$.
Let $S  = \spa \{ 1, s_1,\dots, s_d, s_1^*,\dots, s_d^* \}$. 
This example is hyper-rigid (see \cite[Corollary 3.4]{Arveson_choqII}), but still has interesting features.

If $\phi \in \cS(S)$, then $(\phi(s_1), \dots, \phi(s_d) ) \subset \ol{\bB_d}$, where $\bB_d$ is the unit ball in $\bC^d$.
It is easy to see that $\phi$ is extreme if and only if $\| (\phi(s_1), \dots, \phi(s_d) ) \|_2 = 1$.
Since the unitary group $\cU_d$ acts on $\cO_d$ by automorphisms, it is enough to consider the point $(1,0,\dots,0)$.
The GNS representation for any extension of this state is the atomic irreducible representation $\sigma_{1,1}$ on $H = \bC \xi \oplus l_2(\bF_d)^{(d-1)}$
described in \cite{DavidsonPitts_fsg}*{Section 3}.
This follows from the analysis in \cite{DavidsonPitts_fsg}, but we will sketch the ideas below.
Therefore, in this case, the face $F$ is a singleton.

Let $\psi$ be any extension of $\phi$ to a state on $\cO_d$,  let $(\pi, \xi)$ be its GNS representation, and set $S_i = \pi(s_i)$
Since $\psi(s_1) = 1$ and $S_1$ is an isometry, we have $S_1 \xi = \xi$. 
Thus $\ran S_i \subset (\bC \xi)^\perp$ for $i \ge 2$. 
But $S_1 (\bC \xi)^\perp \subset (\bC \xi)^\perp$ as well, 
so any word $S_w$ for $w = i_1\dots i_k \in \bF_d^+$ that is not just a sequence of 1's has $S_w\xi \in (\bC \xi)^\perp$.
Hence $\psi(S_w) = 1$ if $w = 1^k$ for $k\ge0$ and $\psi(S_w) = 0$ otherwise.
We can now show that $\xi_i = S_i\xi$ for $2 \le i \le d$ are wandering vectors for $S_1,\dots,S_d$, meaning that $\ip{S_w \xi_i, \xi_i} = 0$ if $w \ne \mt$.
Indeed,
\[
 \ip{S_w \xi_i,\xi_i} = \ip{S_{wi} \xi, S_i \xi} = 
 \begin{cases}
  0 &\IF w \ne iw'\\
  \psi(S_{w'i}) = 0 &\IF w = iw'.
 \end{cases}
\]
It follows that the subspaces $M_i = \overline{\operatorname{span}}\{ S_w\xi_i : w \in \bF_d^+\}$ are invariant subspaces on which 
the $S_i$ act like the left regular representation of $\bF_d^+$ on Fock space.
These subspaces are pairwise orthogonal, and since $\xi$ is cyclic, we get $H = \bC \xi \oplus M_2 \oplus\dots\oplus M_d$.
The representation is now seen to be $\sigma_{1,1}$ as claimed.
It follows that $\psi$ is uniquely determined on $\cO_d$.

More generally, if $\phi:S \to M_n$ is a u.c.p.\ map, then the row matrix $X = \big[ \phi(s_1), \dots, \phi(s_d) \big]$ is a row contraction.
The extreme points are easily seen to be the row co-isometries.
Indeed, these are evidently extreme. Otherwise the polar decomposition $X = |X^*|U$ allows one to write
$X$ as an average of distinct row co-isometries if $|X^*| \ne I_n$.
The row co-isometries dilate to a (unique minimal) row isometry by the Frazho--Bunce--Popescu dilation theorem \cite{Frazho, Bunce, Popescu}; 
and these dilations are in fact row unitary by \cite{DKS}.
So they determine a representations of $\cO_d$, which compresses to u.c.p.\ maps $S \to M_n$.
It follows from \cite{DKS} that it is irreducible (hence boundary) if and only if $C^*(\phi(s_1), \dots, \phi(s_d) ) = M_n$.
This latter issue does not arise in the scalar situation.
\end{exa}

\begin{exa} \label {Ex:affine}
Let $S \subset C(X)$ be a function system; i.e. an operator system in an abelian C*-algebra.
Let $K = \cS(S)$ be its state space. 
By a theorem of Kadison \cite{Kadison}, $S$ is isometrically order isomorphic to $A(K)$, the space of continuous affine functions on $K$
considered as a subspace of $C(K)$.
The state space of $A(K)$ is just the set of evaluations $\ep_x$ at points $x \in K$.
Each state $\ep_x$ extends to the character $\delta_x$ of evaluation at $x$ on $C(K)$.
We consider $\delta_x$ as a probability measure on $K$ representing $x$.
However if $x$ is not extreme, say $x = (x_1+x_2)/2$, then $\ep_x = (\ep_{x_1} + \ep_{x_2})/2$.
Therefore $(\delta_{x_1} + \delta_{x_2})/2$ is another positive extension to $C(K)$; that is, $x$ has other representing measures.
On the other hand, if $x$ is extreme, then $\delta_x$ is the only representing measure for $x$
by a result of Bauer \cite{Phelps}*{Proposition 1.4}.
We conclude that a point in $\cS(A(K))$ is extreme if and only if it has the unique extension property if and only if it is boundary.
\end{exa}

\begin{exa}
Here is a simple example where the face $F$ is not a singleton, but every extreme point of $F$ produces a boundary representation.
It is taken from \cite{CJKLS}*{Example 12}.
Define 
\[
 A = \begin{bmatrix}0&0&0&0\\0&0&0&0\\0&0&1&0\\0&0&0&1 \end{bmatrix}
 \qand
 B = \begin{bmatrix}0&0&2&0\\0&0&0&1\\2&0&0&1\\0&1&1&0 \end{bmatrix} .
\]
Let $S = \spa \{ I_4, A, B \}$.
It is easy to check that $C^*(S) = M_4$, which is simple, so $\Cmin(S) = M_4$.
This C*-algebra has a unique irreducible representation up to unitary equivalence, so it must be a boundary representation for $S$.
The states on $M_4$ are given as $\psi_T(X) = \Tr(XT)$ for $T\ge0$ with $\Tr(T) = 1$;
and the extreme points are the vector states $\psi_{xx^*}(X) = \ip{Xx,x}$ for $\|x\|=1$.

A state $\phi$ on $S$ is determined by $\alpha = \phi(A)$ and $\beta = \phi(B)$.
Let $K$ be the convex set of all pairs $(\alpha,\beta)$.
We can identify $(\alpha,\beta)$ with the point $\alpha + i \beta$ in  $W(A+iB)$, the numerical range of $A+iB$,
as the numerical range is convex by the Toeplitz--Hausdorff theorem.
Clearly $\alpha \in [0,1]$ and $\beta \in \bR$. 
We will see that $(0,0)$ is the unique point in $K \cap \{0\}\times\bR$; and hence it is an extreme point.
But $(0,0)$ has a non-trivial face $F$ of extensions to states on $M_4$.

If $\phi_T(A)=0$, this forces $T = T_1 \oplus O_2$ for $T_1 \in M_2^+$  with $\Tr(T_1) = 1$.
But then $\phi_T(B) = 0$ as well.
This shows that $(0,0)$ is the only point of $K$ on the $y$-axis.
So
\[
 F = \{ \phi_T : T = T_1 \oplus O_2,\ T_1 \in M_2^+,\ \Tr(T_1) = 1 \} .
\]
The extreme points are the vector states $\{ \phi_{xx^*} : x = (x_1,x_2,0,0),\ \|x\| = 1 \}$.
Two vectors $x,y$ give rise to the same state on $M_4$ if and only if $x$ is a unimodular multiple of $y$,
so the extreme points of $F$ can be identified with projective space $P^1(\mathbb{C})$.
\end{exa}

\section{An Example on RKHSs}

The goal of this section is to construct a pure state on an operator system $S \subset \Cmin(S)$ such that
the face $F$ of state extensions to $\Cmin(S)$ is an interval and exactly one of the extreme points
of $F$ yields a boundary representation.
The construction is inspired by some results of Clou\^atre and the second author, see
\cite{CH18} and especially Theorem 6.2 there.
Our operator system $S$ will consist of operators on a reproducing kernel Hilbert space (RKHS)
of analytic functions on the open unit disc $\mathbb{D} \subset \mathbb{C}$.
We recall the necessary basics from the theory of RKHS. Further information
can be found in the books \cite{AM02,PR16}.

For $s \in \mathbb{R}$, let
\begin{equation*}
  \mathcal{H}_s = \Big\{ f(z) = \sum_{n=0}^\infty \widehat{f}(n) z^n \in \mathcal{O}(\mathbb{D}): \|f\|^2 = \sum_{n=0}^\infty (n+1)^{-s} |\widehat{f}(n)|^2 < \infty \Big\}.
\end{equation*}
This scale of spaces contains many important function spaces, such has the Hardy space $H^2$ for $s=0$,
the Bergman space for $s=1$ and the Dirichlet space for $s=-1$.
Each of the spaces $\mathcal{H}_s$ is an RKHS of functions on $\mathbb{D}$, i.e.
for every point $w \in \mathbb{D}$, the functional of evaluation at $w$ is bounded on $\mathcal{H}$. Let
\begin{equation*}
  k_w(z) = K(z,w) = \sum_{n=0}^\infty (n+1)^s (z \overline{w})^n \quad (z,w \in \mathbb{D}).
\end{equation*}
Then $k_w \in \mathcal{H}_s$ and $\langle f,k_w \rangle =f(w)$ for all $f \in \mathcal{H}_s, w \in \mathbb{D}$.
This means that $K$ is the reproducing kernel of $\mathcal{H}_s$.

It is straightforward to check that every polynomial $p \in \mathbb{C}[z]$ defines a bounded linear
multiplication operator $M_p: \mathcal{H}_s \to \mathcal{H}_s, f \mapsto p \cdot f$.
Let $\mathcal{T} \subset B(\mathcal{H}_s)$ denote the unital $C^*$-algebra generated by $M_z$.
Then there is a short exact sequence of $C^*$-algebras
\begin{equation}
  \label{eqn:short_exact}
  0 \longrightarrow \mathcal{K} \longrightarrow \mathcal{T} \longrightarrow C(\mathbb{T}) \to 0,
\end{equation}
where $\mathcal{K}$ denotes the space of compact operators on $\mathcal{H}_s$, the second map is the inclusion and the third map $\rho: \mathcal{T} \to C(\mathbb{T})$
satisfies $\rho(M_p) = p \big|_{\mathbb{T}}$ for $p \in \mathbb{C}[z]$.
Indeed, this follows from the observations that $M_z$ is essentially unitary, so the essential spectrum of $M_z$
equals $\mathbb{T}$ by rotation invariance, and that $\mathcal{T}$ is an irreducible $C^*$-algebra containing a non-zero compact operator.
See for instance \cite[Theorem 4.6]{GHX04} for details.

We are now ready to describe our example.
Let
\begin{equation*}
  \delta: \mathcal{T} \to \mathbb{C}, \quad T \mapsto (\rho(T))(1).
\end{equation*}
Let $\bigvee$ denote the norm closed linear span.
We will work with the operator system
\begin{equation*}
    S = \bigvee\{M_p M_q^*: p, q \in \mathbb{C}[z]\} \subset \mathcal{T}
\end{equation*}
and the state $\varphi = \delta \big|_{S}$, i.e.
\begin{equation*}
  \varphi(M_p M_q^*) = p(1) \overline{q(1)} \quad (p,q \in \mathbb{C}[z]).
\end{equation*}
There is a crucial difference between the cases $s < -1$ and $s \ge -1$.
If $s \ge -1$, then $\delta$ is a boundary representation for the operator
system generated by $\{M_p: p \in \mathbb{C}[z]\}$, and in particular for $S$.
Indeed, for $-1 \le s \le 0$, this statement is part of \cite[Theorem 6.2]{CH18}.
For $s \ge 0$, the operator $M_z$ is a contraction and $\delta(M_z) = 1$,
which easily implies the boundary property by a multiplicative domain argument;
see also \cite[Theorem 3.1.2]{Arveson_subI} and \cite[Theorem 3.4]{CH18} for more general statements.

For $s < -1$, a new phenomenon arises. In this case, $\sum_{n=1}^\infty (n+1)^s < \infty$, and the Cauchy-Schwarz inequality shows that the Taylor coefficients of every function $f \in \mathcal{H}_s$
belong to $\ell^1$; whence $f$ extends to a continuous function on $\overline{\mathbb{D}}$.
The series defining $K$ converges uniformly on $\overline{\mathbb{D}} \times \overline{\mathbb{D}}$,
and the reproducing property of $K$ holds for all points in $\overline{\mathbb{D}}$, so $\mathcal{H}_s$
becomes an RKHS on $\overline{\mathbb{D}}$ in this way.

For $w \in \overline{\mathbb{D}}$, let $\widehat{k}_w = k_w / \|k_w\|$ denote the normalized kernel.
Using the basic fact that $M_p^* k_w = \overline{p(w)} k_w$, we see that
\begin{equation*}
  \langle M_p M_q^* \widehat{k}_1, \widehat{k}_1 \rangle = p(1) \overline{q(1)}
\end{equation*}
for all $p,q \in \mathbb{C}[z]$.
Therefore, in addition to $\delta$, we obtain another extension of $\varphi$ to a state
on $\mathcal{T}$, namely
\begin{equation*}
  \psi: \mathcal{T} \to \mathbb{C}, \quad T \mapsto \langle T \widehat{k}_1, \widehat{k}_1 \rangle.
\end{equation*}
The two extensions $\psi$ and $\delta$ differ,
since $\delta$ annihilates all compact operators, whereas $\psi$ does not.
In particular, $\delta$ is not a boundary representation for $S$ in case $s < -1$.
(For the smaller operator system generated by $\{M_p: p \in \mathbb{C}[z]\}$,
this was already observed in \cite[Theorem 6.2]{CH18}.)

The following result shows that $S$ is an example as announced at the beginning of the section.

\begin{thm}
  \label{thm:H_s}
  Let $s < -1$. With notation as above, we have:
  \begin{enumerate} [label=\normalfont{(\arabic*)}]
    \item $\mathcal{T} = \Cmin(S)$,
    \item $\varphi$ is a pure state of $S$,
    \item the set of states on $\mathcal{T}$ extending $\varphi$ is $\{t \psi + (1 - t) \delta: 0 \le t \le 1 \}$,
    \item the GNS representation of $\psi$ is a boundary representation for $S$, and
    \item $\delta$ is a character that is not a boundary representation for $S$.
  \end{enumerate}
\end{thm}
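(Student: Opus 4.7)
The plan is to treat (1)--(3) and (5) essentially as one package, leaving (4) as the main technical obstacle. For (1), note that $M_z = M_z M_1^* \in S$ and $M_z^* = M_1 M_z^* \in S$, so $C^*(S) = \mathcal{T}$. The only proper nonzero ideal of $\mathcal{T}$ is $\mathcal{K}$, so the Shilov ideal of $S$ in $\mathcal{T}$ is either $0$ or $\mathcal{K}$. A direct computation on the orthonormal basis $e_n = (n+1)^{s/2} z^n$ yields $\|M_z\|_{\mathcal{T}} = 2^{-s/2} > 1 = \|\rho(M_z)\|$ for $s < 0$, so the quotient map $\mathcal{T} \to C(\mathbb{T})$ is not isometric on $S$. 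Hence the Shilov ideal is $0$ and $\Cmin(S) = \mathcal{T}$.

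The heart of the argument is (3). The short exact sequence \eqref{eqn:short_exact} lets one write every state $\omega$ on $\mathcal{T}$ as
\[
  \omega(T) = \Tr(T T_0) + \int_{\mathbb{T}} \rho(T) \, d\mu,
\]
with $T_0 \ge 0$ trace class, $\mu \ge 0$ a Borel measure on $\mathbb{T}$, and $\Tr(T_0) + \mu(\mathbb{T}) = 1$. I would probe with $M_{1-z} M_{1-z}^* \in S$, on which $\varphi$ vanishes: both nonnegative contributions must then vanish, so $\int_{\mathbb{T}} |1-z|^2 \, d\mu = 0$ makes $\mu = b \delta_1$, while $\Tr(M_{1-z} M_{1-z}^* T_0) = \|M_{1-z}^* T_0^{1/2}\|_{HS}^2 = 0$ places the range of $T_0^{1/2}$, and hence of $T_0$, inside $\ker(M_z^* - I) = \mathbb{C} k_1$. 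Here the hypothesis $s < -1$ enters decisively, since it is what makes $k_1$ belong to $\mathcal{H}_s$. Therefore $T_0 = a \hat k_1 \hat k_1^*$ and $\omega = a \psi + b \delta$, giving (3). For (2), if $\varphi = (\sigma_1 + \sigma_2)/2$ with $\sigma_i \in \cS(S)$, extensions $\tilde \sigma_i$ to $\mathcal{T}$ have decompositions $(T_i, \mu_i)$ whose midpoint decomposition again extends $\varphi$; by (3), $T_1 + T_2$ and $\mu_1 + \mu_2$ are supported in $\mathbb{C} \hat k_1 \hat k_1^*$ and at $\{1\}$ respectively, and positivity propagates this structure to each summand. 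Hence $\tilde \sigma_i = a_i \psi + b_i \delta$ and $\sigma_i = (a_i + b_i)\varphi = \varphi$, proving purity. Part (5) is then immediate, as (3) supplies $\psi \ne \delta$ as a second u.c.p.\ extension of the character $\delta\big|_S = \varphi$.

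The real work is (4). Let $\Phi: \mathcal{T} \to B(\mathcal{H}_s)$ be u.c.p.\ with $\Phi\big|_S = \id$, and minimally dilate as $\Phi(T) = V^* \pi(T) V$. The equality $\Phi(M_z M_z^*) = M_z M_z^* = \Phi(M_z)\Phi(M_z)^*$, valid because $M_z$ and $M_z M_z^*$ both lie in $S$, is a saturation of the Kadison--Schwarz inequality; a direct inner-product computation then yields the intertwining identity $\pi(M_z)^* V = V M_z^*$, and hence $\pi(M_z)^* V k_w = \bar w V k_w$ for every $w \in \overline{\mathbb{D}}$. I would then split $\pi = \pi_0 \oplus \pi_1$ on $H = H_0 \oplus H_1$ according to whether $\pi$ annihilates $\mathcal{K}$ or is faithful on it, so that $\pi_1 \cong \id^{(m)}$ acts as $T \otimes I$ on $\mathcal{H}_s \otimes L$ while $\pi_0(M_z)$ is unitary. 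For $|w| < 1$ the eigenvalue $\bar w$ is off the unit circle, so the $H_0$-component of $V k_w$ vanishes; density of $\{k_w : w \in \mathbb{D}\}$ in $\mathcal{H}_s$ then gives $V \mathcal{H}_s \subset H_1$. Inside $\mathcal{H}_s \otimes L$, the eigenspace of $M_z^* \otimes I$ at $\bar w$ is $\mathbb{C} k_w \otimes L$, so $V k_w = k_w \otimes l_w$ with unit vectors $l_w \in L$; isometry of $V$ combined with $K(w, 0) = 1$ forces $\langle l_w, l_0 \rangle = 1$, whence $l_w \equiv v$ is constant. Thus $Vh = h \otimes v$ and $\Phi(T) = V^*(T \otimes I)V = T$ for all $T \in \mathcal{T}$. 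The main obstacle is the clean extraction of the intertwining identity from Kadison--Schwarz together with the careful analysis of the $\pi_0$-summand via the spectral properties of unitaries.
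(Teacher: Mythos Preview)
Your proof is correct and takes a genuinely different, more self-contained route than the paper. Two small slips: in (1) you write ``the only proper nonzero ideal of $\mathcal{T}$ is $\mathcal{K}$'', which is false since $\mathcal{T}/\mathcal{K}\cong C(\mathbb{T})$ has many ideals; what you need (and what makes your argument work) is that $\mathcal{K}$ is the \emph{minimal} nonzero ideal, so any nonzero Shilov ideal would contain $\mathcal{K}$ and the quotient would factor through $C(\mathbb{T})$. In (4) you prove that the identity representation has the unique extension property but do not explicitly connect this to $\psi$; you should add the one-line observation that since $\mathcal{T}\supset\mathcal{K}$, the vector $\widehat{k}_1$ is cyclic and hence the GNS representation of $\psi$ \emph{is} the identity representation.

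The comparison with the paper is worth recording. The paper leans on the complete Pick structure: it uses the SOT identity $I-P_0=\sum_{n\ge1} b_n M_{z^n}M_{z^n}^*$ (which requires $b_n\ge0$) to bound $\tau(P_0)$ and thereby parametrize the extension face, and it imports the fact that the identity representation is boundary from \cite{CH18} via Arveson's boundary theorem. Your argument avoids both. For (3) you instead decompose an arbitrary extension as a normal part plus a part annihilating $\mathcal{K}$ and probe with the single element $M_{1-z}M_{1-z}^*\in S$; this uses only that $k_1\in\mathcal{H}_s$, i.e.\ $s<-1$, and nothing about the sign of the $b_n$. For (4) you give a direct multiplicative-domain/Stinespring proof, exploiting only that $M_z$ and $M_zM_z^*$ both lie in $S$. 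Your derivation of (2) from (3) is also cleaner than the paper's route through a unique weak-$*$ extension lemma. The upshot is that your argument actually establishes Theorem~\ref{thm:H_s} for any regular rotationally invariant space with bounded kernel, without the complete Pick hypothesis the paper carries through its lemmas.
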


The proof of Theorem \ref{thm:H_s} occupies the remainder of ths section.
In fact, we will work in slightly greater generality.
Let $(a_n)$ be a sequence of strictly positive numbers with $a_0 = 1$ and $\frac{a_n}{a_{n+1}} \to 1$
and define
\begin{equation*}
  \mathcal{H} = \Big\{ f(z) = \sum_{n=0}^\infty \widehat{f}(n) z^n  \in \mathcal{O}(\mathbb{D}): \|f\|^2 = \sum_{n=0}^\infty \frac{|\widehat{f}(n)|^2}{a_n} < \infty \Big\}.
\end{equation*}
Such a space is called a regular rotationally invariant space on $\mathbb{D}$.
The space $\mathcal{H}$ is an RKHS of functions on $\mathbb{D}$ with reproducing kernel
\begin{equation*}
  k_w(z) = K(z,w) = \sum_{n=0}^\infty a_n (z \overline{w})^n \quad (z,w \in \mathbb{D}).
\end{equation*}
We may define the $C^*$-algebra $\mathcal{T}$ and the operator system $S$ as before.
Again, there is a short exact sequence as in \eqref{eqn:short_exact}.

Assume that $\sum_{n=0}^\infty a_n < \infty$ (equivalently, $K$ is bounded). Then every function in $\mathcal{H}$ extends to a continuous
function on $\overline{\mathbb{D}}$, and $\mathcal{H}$ becomes an RKHS on $\overline{\mathbb{D}}$ in this way.
Every polynomial $p \in \mathbb{C}[z]$ defines a bounded linear
multiplication operator $M_p: \mathcal{H} \to \mathcal{H}, f \mapsto p \cdot f$.
Moreover, we define states
$\varphi$ on $S$ and $\psi,\delta$ on $\mathcal{T}$ as before.

Our computations will become simpler by assuming in addition that $\mathcal{H}$ is a complete Pick space.
Complete Pick spaces are defined in terms of an interpolation property, but for our purposes,
it suffices to use an equivalent description that says that $\mathcal{H}$ is a complete Pick space
if and only if the coefficients $(b_n)_{n=1}^\infty$, defined by the power series identity
\begin{equation*}
  \sum_{n=1}^\infty b_n z^n = 1 - \frac{1}{\sum_{n=0}^\infty a_n z^n},
\end{equation*}
satisfy $b_n \ge 0$ for all $n \ge 1$; see \cite[Theorem 7.33]{AM02}.
The spaces $\mathcal{H}_s$ are complete Pick spaces for all $s \le 0$;  see \cite[Corollary 7.41]{AM02}.

The following lemma in particular establishes properties (1), (4) and (5) in Theorem \ref{thm:H_s}.

\begin{lem}
  Let $\mathcal{H}$ be a regular rotationally invariant complete Pick space on $\mathbb{D}$ with bounded kernel. Then:
  \begin{enumerate}[label=\normalfont{(\alph*)}]
    \item The identity representation of $\mathcal{T}$ is a boundary representation for $S$. In particular,
      $\Cmin(S) = \mathcal{T}$.
    \item The character $\delta$ is not a boundary representation for $S$.
    \item The GNS-representation of $\psi$ is the identity representation, and hence a boundary representation for $S$.
  \end{enumerate}
\end{lem}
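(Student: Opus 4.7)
My plan is to prove (a) via a uniqueness-of-state-extension argument, from which (b) and (c) follow easily. The key sublemma is that for every $w \in \mathbb{D}$, the evaluation state $\varphi_w \colon M_p M_q^* \mapsto p(w)\overline{q(w)}$ on $S$ has a unique extension to a state on $\mathcal{T}$, namely the vector state $\psi_w(T) = \langle T \widehat{k}_w, \widehat{k}_w \rangle$.

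To prove the sublemma, let $\mu$ be any state of $\mathcal{T}$ with $\mu|_S = \varphi_w$. Its restriction to $\mathcal{K}$ is given by $\mu|_{\mathcal{K}}(K) = \Tr(K \sigma)$ for a unique positive trace-class operator $\sigma$ with $t := \Tr(\sigma) \le 1$, and the remainder $\mu_s := \mu - \Tr(\,\cdot\,\sigma)$ is a positive functional vanishing on $\mathcal{K}$, hence factoring through $\mathcal{T}/\mathcal{K} = C(\mathbb{T})$. Since $\rho(M_{z^k} M_{z^k}^*) = 1$, we have $\mu_s(M_{z^k} M_{z^k}^*) = 1 - t$, so
\[
  \Tr(M_{z^k} M_{z^k}^* \sigma) + (1 - t) = \varphi_w(M_{z^k} M_{z^k}^*) = |w|^{2k}.
\]
Both summands on the left are non-negative and $|w|^{2k} \to 0$ as $k \to \infty$, forcing $t = 1$, so $\mu$ is normal. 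Write $\sigma = \sum_j \lambda_j v_j v_j^*$ in spectral form; applying $\mu(M_p M_p^*) = |p(w)|^2$ to $p(z) = z - w$ yields $\sum_j \lambda_j \|(M_z^* - \overline{w})v_j\|^2 = 0$, so each $v_j$ with $\lambda_j > 0$ is an eigenvector of $M_z^*$ at $\overline{w}$. A direct computation in the weighted-shift basis identifies the eigenspace of $M_z^*$ at $\overline{w}$ as $\mathbb{C} k_w$, so orthonormality of the $v_j$ forces $\sigma = \widehat{k}_w \widehat{k}_w^*$, whence $\mu = \psi_w$.

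Granting the sublemma, let $\Phi \colon \mathcal{T} \to B(\mathcal{H})$ be any u.c.p.\ map extending $\mathrm{id}_S$. For every $w \in \mathbb{D}$, $\psi_w \circ \Phi$ is a state of $\mathcal{T}$ restricting to $\varphi_w$ on $S$, so the sublemma gives $\psi_w \circ \Phi = \psi_w$, i.e.\ $\langle \Phi(T) \widehat{k}_w, \widehat{k}_w \rangle = \langle T \widehat{k}_w, \widehat{k}_w \rangle$ for every $T \in \mathcal{T}$. Since $\{\widehat{k}_w : w \in \mathbb{D}\}$ has dense linear span in $\mathcal{H}$ and the quadratic form $v \mapsto \langle (\Phi(T) - T) v, v\rangle$ is norm-continuous, it vanishes identically, so $\Phi(T) = T$. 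This proves (a); faithfulness of the identity representation then forces the Shilov boundary ideal to be zero, so $\Cmin(S) = \mathcal{T}$. For (b), $\delta$ and $\psi$ are two distinct state extensions of $\varphi$ (e.g.\ $\psi(\widehat{k}_1 \widehat{k}_1^*) = 1 \ne 0 = \delta(\widehat{k}_1 \widehat{k}_1^*)$), so the character $\delta$ fails the unique extension property. For (c), $\widehat{k}_1$ is cyclic for the irreducible action of $\mathcal{T}$ on $\mathcal{H}$, so the GNS representation of $\psi$ is unitarily equivalent to the identity representation, which is a boundary representation by (a).

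The principal obstacle is the uniqueness sublemma. The vanishing of the singular part relies crucially on $|w| < 1$; for $w = 1 \in \mathbb{T}$ the singular state $\delta$ does extend $\varphi$, which is precisely why $\delta$ fails to be a boundary representation.
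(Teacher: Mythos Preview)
Your sublemma is correct, and parts (b) and (c) match the paper. Your route to (a) differs from the paper's, which simply invokes \cite[Theorem~6.2]{CH18} (proved there via Arveson's boundary theorem); your direct argument through the unique extension property for the interior evaluation states $\varphi_w$ is a pleasant self-contained alternative. There is, however, a gap in the final step. From $\langle(\Phi(T)-T)\widehat k_w,\widehat k_w\rangle=0$ for every $w\in\bD$ you conclude $\Phi(T)=T$ by appealing to norm-continuity of the quadratic form $v\mapsto\langle(\Phi(T)-T)v,v\rangle$ together with density of the linear span of the $\widehat k_w$. That inference is invalid: a quadratic form that vanishes on a set need not vanish on its linear span, because the cross terms $\langle(\Phi(T)-T)k_w,k_\lambda\rangle$ for $w\neq\lambda$ are not controlled by the diagonal values. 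Continuity and density of the span would suffice for a \emph{linear} or \emph{sesquilinear} form, not for a quadratic one evaluated only at the generators.

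The repair is to use analyticity rather than mere continuity. The function $(w,\lambda)\mapsto\langle(\Phi(T)-T)k_w,k_\lambda\rangle$ is anti-holomorphic in $w$ and holomorphic in $\lambda$ on $\bD\times\bD$, since $w\mapsto k_w$ is anti-holomorphic into $\cH$ and $\langle\,\cdot\,,k_\lambda\rangle$ is evaluation at $\lambda$. A sesqui-holomorphic function on $\bD\times\bD$ that vanishes on the diagonal vanishes identically: expand as $\sum_{m,n}c_{mn}\bar w^m\lambda^n$, set $\lambda=w=re^{i\theta}$, take Fourier coefficients in $\theta$, then vary $r$ to force every $c_{mn}=0$. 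Hence $\langle(\Phi(T)-T)k_w,k_\lambda\rangle=0$ for all $w,\lambda\in\bD$, and now totality of the kernels legitimately gives $\Phi(T)=T$. With this correction your argument goes through and yields (a) without appeal to the boundary theorem.
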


\begin{proof}
  (a) Using Arveson's boundary theorem, it was shown in \cite[Theorem 6.2]{CH18} that the identity representation
is a boundary representation for the operator system generated by $\{M_p: p \in \mathbb{C}[z]\}$. In particular, it is a boundary representation for $S$.
It is well known that this implies that $\Cmin(S) = C^*(S) = \mathcal{T}$; this follows from the invariance principle
for boundary representations \cite[Theorem 2.1.2]{Arveson_subI}.

(b) As observed in the discussion preceding Theorem \ref{thm:H_s}, the state $\psi$ is another u.c.p.\ extension of $\delta \big|_S$ to $\mathcal{T}$.

(c) Since $\mathcal{T}$ contains the compact operators, the vector $\widehat{k}_1$ is cyclic for the identity representation
of $\mathcal{T}$, and so the identity representation is the GNS representation of $\psi$.
\end{proof}

Properties (2) and (3) of Theorem \ref{thm:H_s} require a little more work.
We start with the following lemma.

\begin{lem}
  \label{lem:S_basics}
  Let $\mathcal{H}$ be a rotationally invariant complete Pick space on $\mathbb{D}$. 
  Let $P_0 = 11^*$ be the rank 1 projection onto $\bC 1$. Then:
  \begin{enumerate}[label=\normalfont{(\alph*)}]
    \item $\bigvee \{ M_p P_0 M_q^*: p,q \in \mathbb{C}[z]\} = \mathcal{K}$.
    \item $\mathcal{T} = \overline{S + \mathcal{K}}^{\|\cdot\|}$.
    \item $1 - P_0 = \sum_{n=1}^\infty b_n M_{z^n} M_{z^n}^*$, where the sum is SOT-convergent.
    \item $\overline{S}^{w^*} = B(\mathcal{H})$.
  \end{enumerate}
\end{lem}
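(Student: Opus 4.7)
My plan is to handle the four assertions in order, with (c) providing the technical heart and the others following by short computations or bootstrapping.

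For (a), the key identity is that for any $f \in \mathcal{H}$,
$
M_p P_0 M_q^* f = M_p \langle M_q^* f, 1 \rangle \cdot 1 = \langle f, q \rangle\, p,
$
i.e.\ $M_p P_0 M_q^* = p \otimes q$, where $u \otimes v$ denotes the rank-one operator $f \mapsto \langle f, v\rangle u$. Because polynomials are norm-dense in $\mathcal{H}$ and $\|p \otimes q - p' \otimes q'\| \le \|p - p'\|\|q\| + \|p'\|\|q - q'\|$, the norm-closed linear span contains every rank-one operator, hence every finite-rank operator, and its closure is $\mathcal{K}$; the reverse inclusion is immediate. For (b), the image $\rho(S)$ contains $p\bar{q}|_{\mathbb{T}}$ for all polynomials $p,q$, a unital self-adjoint subalgebra of $C(\mathbb{T})$ that separates points, hence dense by Stone--Weierstrass. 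Given $T \in \mathcal{T}$ and $\varepsilon > 0$, pick $s \in S$ with $\|\rho(T - s)\| < \varepsilon$; the quotient norm identity for $\mathcal{T}/\mathcal{K} \cong C(\mathbb{T})$ then produces $k \in \mathcal{K}$ with $\|T - s - k\| < \varepsilon$, so $T \in \overline{S + \mathcal{K}}^{\|\cdot\|}$.

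Part (c) is the technical heart. I would work in the orthonormal basis $e_k = \sqrt{a_k}\, z^k$. A direct computation shows $M_z^* e_k = \sqrt{a_{k-1}/a_k}\, e_{k-1}$ for $k \ge 1$, and telescoping gives that $M_{z^n} M_{z^n}^*$ is diagonal in $\{e_k\}$, with eigenvalue $a_{k-n}/a_k$ on $e_k$ for $k \ge n$ and $0$ otherwise. Comparing coefficients in the formal identity $L(z) K(z) = K(z) - 1$, where $K(z) = \sum_{n \ge 0} a_n z^n$ and $L(z) = \sum_{n \ge 1} b_n z^n$, yields $\sum_{n=1}^k b_n a_{k-n} = a_k$ for every $k \ge 1$. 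Dividing through by $a_k$, the $k$-th diagonal entry of $T_N := \sum_{n=1}^N b_n M_{z^n} M_{z^n}^*$ increases to $1$ for $k \ge 1$ and stays at $0$ for $k = 0$, matching $I - P_0$. Since $(T_N)$ is a bounded, increasing sequence of positive diagonal operators, monotone convergence yields SOT-convergence $T_N \to I - P_0$.

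For (d), I bootstrap from (c): each $I - T_N$ is a linear combination of elements of $S$, hence in $S$, and $I - T_N \to P_0$ in SOT, so $P_0 \in \overline{S}^{w^*}$. Because $M_p M_{z^n} M_{z^n}^* M_q^* = M_{p z^n} M_{q z^n}^* \in S$, each $M_p(I - T_N) M_q^* \in S$, and SOT-continuity of $T \mapsto M_p T M_q^*$ on bounded sets gives $p \otimes q = M_p P_0 M_q^* \in \overline{S}^{w^*}$ for all polynomials. Norm approximation of $\mathcal{H}$-vectors by polynomials supplies every rank-one operator, hence every finite-rank operator. Finally, $P_n T \to T$ SOT for finite-rank projections $P_n \nearrow I$ shows that finite-rank operators are SOT-dense in $B(\mathcal{H})$; since WOT, SOT and $w^*$ closures coincide for convex subsets of $B(\mathcal{H})$, this gives $\overline{S}^{w^*} = B(\mathcal{H})$. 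The main obstacle is the kernel identity in (c): the complete Pick hypothesis enters critically through $b_n \ge 0$, which allows monotone convergence rather than a direct operator norm estimate on the partial sums.
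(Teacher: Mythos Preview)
Your argument is correct and follows the paper's overall architecture for parts (a), (b), and (d); the one genuine divergence is in part (c). The paper obtains boundedness of the partial sums by packaging $\sqrt{b_n}\,z^n$ into a row multiplier $\Phi$ and invoking the kernel characterisation of contractive multipliers (so that $M_\Phi M_\Phi^* \le I$), and then identifies the SOT-limit by testing against pairs of reproducing kernels. You instead diagonalise $M_{z^n} M_{z^n}^*$ explicitly in the orthonormal basis $e_k = \sqrt{a_k}\,z^k$ and read off both boundedness and the limit from the coefficient recursion $\sum_{n=1}^{k} b_n a_{k-n} = a_k$ coming from $L K = K - 1$. Your route is more elementary in that it avoids the multiplier theorem entirely and makes the role of the complete Pick hypothesis ($b_n \ge 0$) completely transparent as the monotonicity needed for SOT-convergence; the paper's route is slicker and generalises more readily to multivariable or non-diagonal settings where an orthonormal basis computation is unavailable. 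For (d), your appeal to the coincidence of SOT, WOT and $w^*$ closures on convex sets is justified by Krein--Smulian (a WOT-closed convex set meets each ball in a WOT-closed, hence $w^*$-closed, set), which is exactly what the paper uses implicitly when it passes from ``contains all compact operators'' to ``equals $B(\mathcal{H})$''.
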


\begin{proof}
  (a)
  Let $p,q \in \mathbb{C}[z]$.
  We claim that
  \begin{equation*}
    M_p P_0 M_q^* f = \langle f,q \rangle p \quad \text{ for all }  f\in \mathcal{H}.
  \end{equation*}
  Since the linear space of the reproducing kernels is dense in $\cH$, it suffices to check
  this identity on reproducing kernels.
  Let $\lambda, \mu \in \mathbb{D}$. Then
  \begin{align*}
    \langle M_p P_0 M_q^* k_\lambda, k_\mu \rangle &= \overline{q(\lambda)} p(\mu) \langle P_0 k_\lambda, k_\mu \rangle \\&
    = \overline{q(\lambda)} p(\mu) \langle k_\lambda, 1 \rangle \langle 1, k_\mu \rangle = \overline{q(\lambda)} p(\mu) \\&
    = \langle \langle k_\lambda,q \rangle p, k_\mu \rangle,
  \end{align*}
  which yields the desired identity.
  Since the polynomials are dense in $\mathcal{H}$, it follows
  that $\bigvee \{M_p P_0 M_q^*: p, q \in \mathbb{C}[z]\}$ contains all rank one operators,
  and hence all compact operators. The reverse inclusion is obvious.

  (b) We use the short exact sequence \eqref{eqn:short_exact}.
  We have
  \begin{equation*}
    \overline{S + \mathcal{K}}^{\|\cdot\|} / \mathcal{K} \subset \mathcal{T}/\mathcal{K} \cong C(\mathbb{T}),
  \end{equation*}
  and the image of the space on the left inside of $C(\mathbb{T})$ contains $z^n$ for all $n \in \mathbb{Z}$.
  By Fej\'er's theorem, it is dense, hence $\overline{S + \mathcal{K}}^{\|\cdot\|} / \mathcal{K} = \mathcal{T} / \mathcal{K}$, which gives (b).

  (c) Let
  \begin{equation*}
    \Phi(z) =
    \begin{bmatrix}
      \sqrt{b_1} z & \sqrt{b_2} z^2 & \cdots
    \end{bmatrix}.
  \end{equation*}
  Then $K(z,w) (1  - \Phi(z) \Phi(w)^*) = 1$, so a well-known characterization
  of multipliers (see for instance \cite[Theorem 6.28]{PR16}) shows that $\Phi$ is a contractive multiplier.
  Thus for all $N \in \mathbb{N}$,
  \begin{equation*}
    \sum_{n=1}^N b_n M_{z^n} M_{z^n}^* \le M_\Phi M_\Phi^* \le I.
  \end{equation*}
  Since the partial sums are increasing, it follows that the sum converges in SOT.
  To determine the limit, let $\lambda, \mu \in \mathbb{D}$.
  Then
  \begin{align*}
    \Big \langle \sum_{n=1}^\infty b_n M_{z^n} M_{z^n}^* k_\lambda, k_\mu   \Big\rangle 
    &= \sum_{n=1}^\infty b_n \mu^n \overline{\lambda}^n \langle k_\lambda, k_\mu \rangle \\
    &= \Big( 1 - \frac{1}{K(\mu,\lambda)} \Big) K(\mu,\lambda) 
    = K(\mu,\lambda) - 1.
  \end{align*}
  On the other hand
  \begin{equation*}
    \langle (I - P_0) k_\lambda, k_\mu \rangle = K(\mu,\lambda) - 1,
  \end{equation*}
  so the result follows again from the fact that the linear span of the reproducing kernels is dense in $\mathcal{H}$.

  (d) By (c), the weak-$*$ closure of $S$ contains $M_p P_0 M_q^*$ for all $p,q \in \mathbb{C}[z]$. 
  So by (a), it contains all compact operators and hence equals $B(\mathcal{H})$.
\end{proof}

The following lemma is special to RKHS with bounded kernel.

\begin{lem}
  \label{lem:weak_star_extension}
  Let $\mathcal{H}$ be a regular rotationally invariant complete Pick space on  $\mathbb{D}$ with bounded kernel.
  Then every state on $S$ has a unique weak-$*$ continuous extension to a state on $B(\mathcal{H})$.
\end{lem}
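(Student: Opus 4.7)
The plan has two parts: uniqueness and existence. Uniqueness is immediate from Lemma \ref{lem:S_basics}(d): any two weak-$*$ continuous states on $B(\mathcal{H})$ agreeing on $S$ must agree on $\overline{S}^{w^*} = B(\mathcal{H})$. So the content is existence.

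For existence, given a state $\mu$ on $S$, I would first extend it to a state $\tilde{\mu}$ on $B(\mathcal{H})$ by the Hahn--Banach/Arveson extension theorem for states on operator systems. Then I would apply the Takesaki normal--singular decomposition in $B(\mathcal{H})^*$ to write $\tilde{\mu} = \tilde{\mu}_n + \tilde{\mu}_s$, where $\tilde{\mu}_n$ is a positive weak-$*$ continuous functional and $\tilde{\mu}_s$ is a positive singular functional (one that vanishes on $\mathcal{K}$). The goal is then to replace $\tilde{\mu}_s$ by a normal functional agreeing with it on $S$, so that $\tilde{\mu}_n + (\text{replacement})$ becomes the desired weak-$*$ continuous state extending $\mu$.

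The heart of the argument is the analysis of $\tilde{\mu}_s|_S$. Since $\tilde{\mu}_s$ vanishes on $\mathcal{K}$, its restriction to $\mathcal{T}$ descends through the short exact sequence \eqref{eqn:short_exact} to a positive linear functional on $\mathcal{T}/\mathcal{K} \cong C(\mathbb{T})$, giving a positive measure $\nu$ on $\mathbb{T}$ with $\tilde{\mu}_s(M_p M_q^*) = \int_{\mathbb{T}} p\overline{q}\,d\nu$ for all $p,q \in \mathbb{C}[z]$. Here I would invoke the bounded-kernel hypothesis: the normalized kernels $\widehat{k}_\lambda$ lie in $\mathcal{H}$ for every $\lambda \in \mathbb{T}$, and the reproducing identity $M_q^* \widehat{k}_\lambda = \overline{q(\lambda)} \widehat{k}_\lambda$ holds there. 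This lets me define the positive trace class operator $T_\nu = \int_{\mathbb{T}} |\widehat{k}_\lambda \rangle \langle \widehat{k}_\lambda |\, d\nu(\lambda)$ and the corresponding normal functional $\omega(X) := \Tr(T_\nu X) = \int_{\mathbb{T}} \langle X \widehat{k}_\lambda, \widehat{k}_\lambda \rangle \, d\nu(\lambda)$ on $B(\mathcal{H})$. A direct computation using the eigenvalue relation gives $\omega(M_p M_q^*) = \int p\overline{q}\, d\nu = \tilde{\mu}_s(M_p M_q^*)$, so $\omega|_S = \tilde{\mu}_s|_S$ by continuity on the norm-dense set of monomials. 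Then $\tilde{\mu}_n + \omega$ is positive, weak-$*$ continuous, agrees with $\mu$ on $S$, and is unital because $\omega(I) = \nu(\mathbb{T}) = \tilde{\mu}_s(I)$.

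The main obstacle is the replacement step, and it is precisely where the bounded-kernel hypothesis is indispensable: without the boundary kernels $\widehat{k}_\lambda$ at $\lambda \in \mathbb{T}$, there is no canonical way to convert the singular data (a boundary measure on $\mathbb{T}$) into a normal functional agreeing with $\tilde{\mu}_s$ on $S$. Indeed, one expects the conclusion to fail for unbounded-kernel spaces such as $H^2$, where the state $\varphi$ has no normal extension at all.
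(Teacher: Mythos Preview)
Your proof is correct and shares the paper's key creative step---replacing the ``boundary'' part of the functional by the normal functional $\omega(X)=\int_{\mathbb{T}}\langle X\widehat{k}_\lambda,\widehat{k}_\lambda\rangle\,d\nu(\lambda)$ built from the boundary kernel vectors---but reaches that step by a different decomposition. The paper extends the given state only to $\mathcal{T}$, applies the GNS construction, and then uses the structure theory of representations of a $C^*$-algebra containing $\mathcal{K}$ (namely $\pi=\pi_1\oplus(\pi_2\circ\rho)$ with $\pi_1$ a multiple of the identity representation) to split off the part coming from $C(\mathbb{T})$. You instead extend all the way to $B(\mathcal{H})$ and invoke Takesaki's normal--singular decomposition $\tilde\mu=\tilde\mu_n+\tilde\mu_s$, observing that a singular positive functional on $B(\mathcal{H})$ must annihilate every rank-one projection and hence all of $\mathcal{K}$. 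Your route is arguably more streamlined, trading representation theory for a single von Neumann algebraic fact; the paper's route stays inside $\mathcal{T}$ and makes the r\^ole of the short exact sequence \eqref{eqn:short_exact} more explicit. One small point you gloss over: to conclude that $\omega$ is weak-$*$ continuous you need $T_\nu$ to be trace class (equivalently, that the Bochner integral converges in trace norm). This follows from Tonelli, since $T_\nu\ge 0$ and $\sum_n\langle T_\nu e_n,e_n\rangle=\int_{\mathbb{T}}\|\widehat{k}_\lambda\|^2\,d\nu=\nu(\mathbb{T})<\infty$; the paper handles the analogous point by proving norm continuity of $\lambda\mapsto\widehat{k}_\lambda$ on $\mathbb{T}$.
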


\begin{proof}
  Uniqueness is immediate from part (d) of Lemma \ref{lem:S_basics}.
  To establish existence of the weak-$*$ continuous extension, let $\tau$ be any state on $S$.
  By extending $\tau$ to a state on $\mathcal{T}$ and applying the GNS-construction,
  we obtain a $*$-homomorphism $\pi: \mathcal{T} \to B(\mathcal{L})$ and a unit vector $\xi \in \mathcal{L}$ such that
  \begin{equation*}
    \tau(T) = \langle \pi(T) \xi,\xi \rangle \quad \text{ for all } T \in S.
  \end{equation*}
  Let $\rho: \mathcal{T} \to C(\mathbb{T})$ be the quotient map in the short exact sequence \eqref{eqn:short_exact}.
  Then standard results about representations of $C^*$-algebras show that there
  exist a representation $\pi_1$ of $\mathcal{T}$ that is unitarily equivalent to a multiple of the identity representation
  and a representation $\pi_2$ of $C(\mathbb{T})$ such that $\pi = \pi_1 \oplus (\pi_2 \circ \rho)$;
  see the discussion preceding \cite[Theorem 1.3.4]{Arveson76}.
  Thus,
  \begin{equation*}
    \tau(T) = \langle \pi_1(T) \xi_1,\xi_1 \rangle + \langle \pi_2(\rho(T)) \xi_2,\xi_2 \rangle \quad (T \in S),
  \end{equation*}
  where $\|\xi_1\|^2 + \|\xi_2\|^2 = 1$.
  Since $\pi_1$ is unitarily equivalent to a multiple of the identity representation, it is clear that the first summand
  extends to a weak-$*$-continuous positive linear functional of norm $\|\xi_1\|^2$ on $B(\mathcal{H})$.

  It remains to extend the second summand to a weak-$*$ continuous linear functional of norm $\|\xi_2\|^2$ on $B(\mathcal{H})$.
  To this end, it suffices to show the following assertion: If $\sigma$ is a state on $C(\mathbb{T})$, then
  there exists a weak-$*$ continuous state $\omega$ on $B(\mathcal{H})$ such that $(\sigma \circ \rho)\big|_S = \omega\big|_S$.

  Let $\sigma$ be a state on $C(\mathbb{T})$. By the Riesz representation theorem, there exists a Borel probability measure
  $\mu$ on $\mathbb{T}$ such that $\sigma(f) = \int_{\mathbb{T}} f \, d \mu$ for all $f \in C(\mathbb{T})$.
  For $\lambda \in \mathbb{T}$, let $\widehat{k}_\lambda = k_\lambda / \|k_\lambda\|$ and define
  \begin{equation*}
    \omega: B(\mathcal{H}) \to \mathbb{C}, \quad T \mapsto \int_{\mathbb{T}} \langle T \widehat{k}_\lambda, \widehat{k}_\lambda \rangle \, d \mu(\lambda).
  \end{equation*}
  Note that
  \begin{equation*}
    \|\widehat{k}_\lambda - \widehat{k}_\mu\|^2 = 2 - 2 \re \frac{K(\mu,\lambda)}{K(\lambda,\lambda)^{1/2} K(\mu,\mu)^{1/2}},
  \end{equation*}
  so since $K$ is continuous on $\overline{\mathbb{D}} \times \overline{\mathbb{D}}$,
  the function $\lambda \mapsto \widehat{k}_\lambda$ is norm continuous.
  Hence the integrand in the definition of $\omega$ is a continuous function on $\mathbb{T}$.
  In fact,
  \begin{equation*}
    \sup_{\|T\| \le 1} | \langle T \widehat{k}_\lambda, \widehat{k}_\lambda \rangle - \langle T \widehat{k}_\mu, \widehat{k}_\mu \rangle |
    \le 2 \| \widehat{k}_\lambda - \widehat{k}_\mu\|,
  \end{equation*}
  so that the integral $\int_{\mathbb{T}} \langle \cdot \widehat{k}_\lambda, \widehat{k}_\lambda \rangle d \mu(\lambda)$ converges
  in the norm of $B(\mathcal{H})^*$. Consequently, $\omega$ is a weak-$*$ continuous state on $B(\mathcal{H})$.
  If $p,q \in \mathbb{C}[z]$, then
  \begin{equation*}
    \omega(M_p M_q^*) = \int_{\mathbb{T}} \overline{q(\lambda)} p(\lambda) \, d \mu(\lambda)
    = \sigma(\rho(M_p M_q^*)).
  \end{equation*}
  This shows that $\sigma \circ \rho$ agrees with $\omega$ on $S$, as desired.
\end{proof}

\begin{rem}
  In the above proof, the state $\omega$ in general only agrees with $\sigma \circ \rho$ on $S$, not necessarily on $\mathcal{T}$.
  For instance, if $\sigma$ is given by integration against normalized Lebesgue measure on $\mathbb{T}$,
  then $(\sigma \circ \rho)(P_0)= 0$, but $\omega(P_0) = \frac{1}{\|k_1\|^2} \neq 0$.
  In particular, $P_0 \notin S$. This is in contrast to the situation for $H^2$, in which case $S = \mathcal{T}$.
\end{rem}

Since $\varphi = \psi \big|_S$, the following lemma now establishes part (2) of Theorem \ref{thm:H_s}
\begin{lem}
  Let $\mathcal{H}$ be a regular rotationally invariant complete Pick space on $\mathbb{D}$ with bounded kernel.
  If $f \in \mathcal{H}, \|f\| =1$, then the state
  \begin{equation*}
    \varphi_f: S \to \mathbb{C}, \quad T \mapsto \langle T f, f \rangle,
  \end{equation*}
  is pure.
\end{lem}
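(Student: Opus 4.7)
The strategy is to transfer the question of purity from $S$ to normal positive functionals on $B(\mathcal{H})$, where it becomes an elementary statement about rank-one operators. Given a positive linear functional $\tau$ on $S$ with $0 \le \tau \le \varphi_f$, the first task is to produce a weak-$*$ continuous positive extension $\widetilde{\tau}$ on $B(\mathcal{H})$. Rescaling $\tau$ and $\varphi_f - \tau$ to states (the degenerate cases $\tau = 0$ or $\tau = \varphi_f$ being trivial) and applying Lemma \ref{lem:weak_star_extension} yields weak-$*$ continuous states extending each. By Lemma \ref{lem:S_basics}(d), $\overline{S}^{w^*} = B(\mathcal{H})$, so any weak-$*$ continuous extension is automatically unique, and hence the extension map is linear. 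Combining these observations, both $\widetilde{\tau}$ and $\widetilde{\varphi_f} - \widetilde{\tau}$ are weak-$*$ continuous positive functionals on $B(\mathcal{H})$, that is, $0 \le \widetilde{\tau} \le \widetilde{\varphi_f}$ as normal functionals.

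Next I identify these extensions concretely. The functional $T \mapsto \langle Tf, f \rangle$ is weak-$*$ continuous on $B(\mathcal{H})$ and restricts to $\varphi_f$ on $S$, so by uniqueness it equals $\widetilde{\varphi_f}$. Under the standard predual duality between $B(\mathcal{H})$ and the trace-class operators, $\widetilde{\varphi_f}$ corresponds to the rank-one operator $f f^*$, and $\widetilde{\tau}$ corresponds to some positive trace-class operator $A$ satisfying $0 \le A \le f f^*$ as operators on $\mathcal{H}$.

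The final step is elementary linear algebra: for any $x \perp f$ one has $0 \le \langle A x, x \rangle \le |\langle x, f \rangle|^2 = 0$, so positivity of $A$ forces $Ax = 0$. Hence $A$ has range contained in $\mathbb{C} f$ and must be of the form $A = t f f^*$ for some $t \in [0,1]$. It follows that $\widetilde{\tau} = t \widetilde{\varphi_f}$ on $B(\mathcal{H})$, and restricting back to $S$ gives $\tau = t \varphi_f$, which is exactly the definition of purity. I do not foresee any serious obstacle; the one subtle point is the linearity of the weak-$*$ continuous extension on positive functionals, but this is forced by the uniqueness coming from Lemma \ref{lem:S_basics}(d).
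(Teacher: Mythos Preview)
Your proof is correct and follows essentially the same strategy as the paper's: use Lemma~\ref{lem:weak_star_extension} to lift to weak-$*$ continuous positive functionals on $B(\mathcal{H})$, invoke Lemma~\ref{lem:S_basics}(d) for uniqueness, and then use purity of vector states on $B(\mathcal{H})$. The only cosmetic difference is that the paper works with the extreme-point formulation of purity (writing $\varphi_f = \tfrac{1}{2}(\varphi_1+\varphi_2)$ and showing $\varphi_1=\varphi_2$), while you work directly with the dominated-functional definition and spell out the rank-one argument via trace-class operators; these are equivalent for states and lead to the same proof.
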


\begin{proof}
  Let $\varphi_1,\varphi_2$ be states on $S$ with $\varphi_f = \frac{\varphi_1 + \varphi_2}{2}$.
  By Lemma \ref{lem:weak_star_extension}, we may extend $\varphi_1,\varphi_2$ to weak-$*$ continuous states $\psi_1,\psi_2$ on $B(\mathcal{H})$.
  Then
  \begin{equation*}
    \langle T f,f \rangle = \frac{\psi_1(T) + \psi_2(T)}{2}
  \end{equation*}
  for all $T \in S$, and hence for all $T \in B(\mathcal{H})$ by Lemma \ref{lem:S_basics} (d).
  Since vector states on $B(\mathcal{H})$ are pure, $\psi_1 = \psi_2$, and in particular $\varphi_1 = \varphi_2$.
\end{proof}

Finally, we obtain part (3) of Theorem \ref{thm:H_s}.

\begin{lem}
  Let $\mathcal{H}$ be a regular rotationally invariant complete Pick space on $\mathbb{D}$ with bounded kernel.
  Let $\varphi,\psi,\delta$ be the states defined before Theorem $\ref{thm:H_s}$.
  Then the face of all states of $\mathcal{T}$ extending $\varphi$ is a line segment equal to
 \[
   \{t \psi + (1 - t) \delta: 0 \le t \le 1\}.
 \]
\end{lem}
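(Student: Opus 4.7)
The plan is to decompose an arbitrary state $\tau$ on $\mathcal{T}$ extending $\varphi$ via the short exact sequence \eqref{eqn:short_exact}, and then rigidify each summand using the purity of $\varphi$ established in the previous lemma together with the uniqueness clause of Lemma \ref{lem:weak_star_extension}. The reverse inclusion $\{t\psi+(1-t)\delta:t\in[0,1]\}\subset F$ is immediate since both $\psi$ and $\delta$ restrict to $\varphi$, so I concentrate on the containment the other way.

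Given a state $\tau$ on $\mathcal{T}$ with $\tau\big|_S=\varphi$, I would first run exactly the construction used in the proof of Lemma \ref{lem:weak_star_extension}: apply GNS to get $(\pi,\xi,\mathcal{L})$, split $\pi=\pi_1\oplus(\pi_2\circ\rho)$ with $\pi_1$ unitarily equivalent to a multiple of the identity representation and $\pi_2$ a representation of $C(\mathbb{T})$, and split $\xi=\xi_1\oplus\xi_2$. Setting $\alpha=\|\xi_1\|^2\in[0,1]$, this produces a weak-$*$ continuous state $\omega$ on $B(\mathcal{H})$ (the normalization of $\langle\pi_1(\cdot)\xi_1,\xi_1\rangle$) and a state $\sigma$ on $C(\mathbb{T})$ (the normalization of $\langle\pi_2(\cdot)\xi_2,\xi_2\rangle$) so that
\[
\tau = \alpha\,\omega + (1-\alpha)(\sigma\circ\rho),
\]
where at the endpoints $\alpha\in\{0,1\}$ the missing summand is simply dropped.

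Restricting to $S$ gives the convex decomposition $\varphi = \alpha\,\omega\big|_S + (1-\alpha)(\sigma\circ\rho)\big|_S$ into two states of $S$. Since $\varphi$ is pure by the preceding lemma, I conclude $\omega\big|_S=\varphi$ and $(\sigma\circ\rho)\big|_S=\varphi$ whenever the corresponding summand is present (the endpoints reduce to a single surviving equation). For $\omega$, the uniqueness clause of Lemma \ref{lem:weak_star_extension} combined with the fact that $\psi$ is itself a weak-$*$ continuous extension of $\varphi$ to $B(\mathcal{H})$ forces $\omega=\psi$. For $\sigma$, by the Riesz representation theorem I write $\sigma(f)=\int_{\mathbb{T}}f\,d\mu$ for a probability measure $\mu$; applying $(\sigma\circ\rho)\big|_S=\varphi$ to $M_pM_p^*$ with $p(z)=z-1$ yields
\[
\int_{\mathbb{T}}|\lambda-1|^2\,d\mu(\lambda) = |p(1)|^2 = 0,
\]
so $\mu$ is the point mass at $1$ and $\sigma\circ\rho=\delta$. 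Hence $\tau=\alpha\psi+(1-\alpha)\delta$, completing the proof.

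I do not expect a serious obstacle: Lemmas \ref{lem:S_basics} and \ref{lem:weak_star_extension}, together with purity of $\varphi$, carry essentially all the weight. The only delicate point is the bookkeeping at $\alpha\in\{0,1\}$, where one summand of the GNS decomposition is absent and one must be careful to interpret the ``convex combination of two states of $S$'' statement as reducing to the surviving equation.
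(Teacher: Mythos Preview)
Your proof is correct but takes a genuinely different route from the paper's. The paper argues via multiplicative domains: from $\tau(M_pM_p^*)=|\tau(M_p)|^2$ it deduces $\tau(M_pP_0M_q^*)=\varphi(M_p)\,\tau(P_0)\,\varphi(M_q^*)$, so by Lemma~\ref{lem:S_basics}(a),(b) the extension $\tau$ is determined by the single scalar $\tau(P_0)$; Lemma~\ref{lem:S_basics}(c) then pins down the range $\tau(P_0)\in[0,\|k_1\|^{-2}]$, matching the endpoint values $\psi(P_0)=\|k_1\|^{-2}$ and $\delta(P_0)=0$. You instead lift the GNS decomposition from the proof of Lemma~\ref{lem:weak_star_extension} to $\tau$ itself and then invoke the purity of $\varphi$ together with the uniqueness clause of that lemma to force the normal part to be $\psi$ and the singular part to be $\delta$. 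Your argument is more structural and recycles the earlier lemmas efficiently---it avoids multiplicative domains and does not need the explicit inequality for $P_0$ in Lemma~\ref{lem:S_basics}(c)---while the paper's approach is more hands-on, yielding an explicit one-parameter description of the face by the value $\tau(P_0)$.
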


\begin{proof}
  By the discussion preceding Theorem \ref{thm:H_s}, each $t \psi + ( 1 - t) \delta$ is a state extending $\varphi$.
  Conversely, let $\tau$ be any state of $\mathcal{T}$ extending $\varphi$.
  Then
  \begin{equation*}
    \tau(M_p M_p^*) = \varphi(M_p M_p^*) = |p(1)|^2 = |\varphi(M_p)|^2 = |\tau(M_p)|^2.
  \end{equation*}
  By multiplicative domains (see for instance \cite[Theorem 3.18]{Paulsen}), we have
  \begin{equation*}
    \tau(M_p P_0 M_q^*) = \tau(M_p) \tau(P_0) \tau(M_q^*)
    = \varphi(M_p) \tau(P_0) \varphi(M_q^*)
  \end{equation*}
  for all $p,q \in \mathbb{C}[z]$.
  Lemma \ref{lem:S_basics} (a) and (b) therefore show that $\tau$ is uniquely determined by the value $\tau(P_0)$.
  Let $\alpha = \frac{1}{\|k_1\|^2}$. Then part (c) of Lemma \ref{lem:S_basics} shows that
  \begin{equation*}
    P_0 \le I - \sum_{n=1}^N b_n M_{z^n} M_{z^n}^*
  \end{equation*}
  for all $N \in \mathbb{N}$, so
  \begin{equation*}
    0 \le \tau(P_0) \le 1 - \sum_{n=1}^\infty b_n = \frac{1}{K(1,1)} =  \alpha.
  \end{equation*}

  On the other hand,
  \begin{equation*}
    \psi(P_0)= \langle P_0 \widehat{k}_1, \widehat{k}_1 \rangle = \frac{1}{\|k_1\|^2} = \alpha
  \end{equation*}
  and $\delta(P_0) = 0$. Therefore,
  \begin{equation*}
    \tau = \frac{\tau(P_0)}{\alpha} \psi + \big( 1 - \frac{\tau(P_0)}{\alpha} \big) \delta
  \end{equation*}
  is a convex combination of $\psi$ and $\delta$.
\end{proof}

\section*{Acknowledgements} 
The first author thanks the Universit\"at des Saarlandes, in particular the second author, for their hospitality
during a recent visit in which much of this paper was written.


\begin{thebibliography}{99}

\bib{Agler88}{incollection}{
  Title 		= {An abstract approach to model theory},
  Author 		= {Agler, J.},
  Booktitle 		= {Surveys of some recent results in operator theory, {V}ol.\ {II}},
  Publisher	= {Longman Sci. Tech.},
  Year		= {1988},
  Address 		= {Harlow},
  Pages		= {1--23},
  Series		= {Pitman Res. Notes Math. Ser.},
  Volume 		= {192},
}
  

\bib{AM02}{book}{
  Title	= {Pick interpolation and {H}ilbert function spaces},
  Author	= {Agler, J.},
  Author 	= {McCarthy, J.E.},
  Publisher= {American Mathematical Society},
  Year	= {2002},
  Address	= {Providence, RI},
  Series	= {Graduate Studies in Mathematics},
  Volume	= {44},
}


\bib{Arveson_subI}{article}{
   author={Arveson, W.B.},
   title={Subalgebras of $C^{\ast} $-algebras},
   journal={Acta Math.},
   volume={123},
   date={1969},
   pages={141--224},
}

\bib{Arveson76}{book}{
  Title  		= {An invitation to {$C\sp*$}-algebras},
  Author		= {Arveson, W.B.},
  Publisher	= {Springer-Verlag, New York-Heidelberg},
  Year		= {1976},
  Note		= {Graduate Texts in Mathematics, No. 39},
}

\bib{Arveson_choq}{article}{
   author={Arveson, W.B.},
   title={The noncommutative Choquet boundary},
   journal={J. Amer. Math. Soc.},
   volume={21},
   date={2008},
   number={4},
   pages={1065--1084},
}

\bib{Arveson_choqII}{article}{
   author={Arveson, W.B.},
   title={The noncommutative Choquet boundary II: hyperrigidity},
   journal={Israel J. Math.},
   volume={184},
   date={2011},
   pages={349--385},
}

\bib{Bunce}{article}{
   author={Bunce, J.W.},
   title={Models for $n$-tuples of noncommuting operators},
   journal={J. Funct. Anal.},
   volume={57},
   date={1984},
   number={1},
   pages={21--30},
}


\bib{CH18}{article}{
  author    = {Clou\^atre, R.},
  author = {Hartz, M.},
  title     = {Multiplier algebras of complete {N}evanlinna--{P}ick spaces: {D}ilations, boundary representations and hyperrigidity},
  journal   = {J. Funct. Anal.},
  year      = {2018},
  volume    = {274},
  number   = {6},
  pages     = {1690--1738},
}

\bib{CJKLS}{article}{
   author={Corey, D.},
   author={Johnson, C.R.},
   author={Kirk, R.},
   author={Lins, B.},
   author={Spitkovsky, I.},
   title={Continuity properties of vectors realizing points in the classical field of values},
   journal={Linear Multilinear Algebra},
   volume={61},
   date={2013},
   number={10},
   pages={1329--1338},
}


\bib{DavidsonKennedy_boundary}{article}{
   author={Davidson, K.R.},
   author={Kennedy, M.},
   title={The Choquet boundary of an operator system},
   journal={Duke Math. J.},
   volume={164},
   date={2015},
   number={15},
   pages={2989--3004},
}

\bib{DavidsonKennedy_ncchoquet}{article}{
   author={Davidson, K.R.},
   author={Kennedy, M.},
   title={Noncommutative Choquet theory},
   date={2019},
   eprint={arXiv:1905.08436}
}

\bib{DKS}{article}{
   author={Davidson, K.R.},
   author={Kribs, D.W.},
   author={Shpigel, M.E.},
   title={Isometric dilations of non-commuting finite rank $n$-tuples},
   journal={Canad. J. Math.},
   volume={53},
   date={2001},
   number={3},
   pages={506--545},
}

\bib{DavidsonPitts_fsg}{article}{
   author={Davidson, K.R.},
   author={Pitts, D.R.},
   title={Invariant subspaces and hyper-reflexivity for free semigroup
   algebras},
   journal={Proc. London Math. Soc. (3)},
   volume={78},
   date={1999},
   number={2},
   pages={401--430},
}

\bib{DritMcCull}{article}{
   author={Dritschel, M.A.},
   author={McCullough, S.A.},
   title={Boundary representations for families of representations of operator algebras and spaces},
   journal={J. Operator Theory},
   volume={53},
   date={2005},
   number={1},
   pages={159--167},
}

\bib{Frazho}{article}{
   author={Frazho, A.E.},
   title={Models for noncommuting operators},
   journal={J. Functional Analysis},
   volume={48},
   date={1982},
   number={1},
   pages={1--11},
}


\bib{GHX04}{article}{
  title		= {{Toeplitz} algebras, subnormal tuples and rigidity on reproducing {$\mathbf C[z_1,\dots,z_d]$}-modules},
  author	= {Guo, K.},
  author 	=  {Hu, J.},
  author 	= {Xu, X.},
  Journal	= {J. Funct. Anal.},
  Year  	= {2004},
  Number	= {1},
  Pages	= {214--247},
  Volume	= {210},
}


\bib{Hamana}{article}{
   author={Hamana, M.},
   title={Injective envelopes of operator systems},
   journal={Publ. Res. Inst. Math. Sci.},
   volume={15},
   date={1979},
   number={3},
   pages={773--785},
}

\bib{Hopenwasser}{article}{
   author={Hopenwasser, A.},
   title={Boundary representations on $C\sp{\ast}$-algebras with matrix units},
   journal={Trans. Amer. Math. Soc.},
   volume={177},
   date={1973},
   pages={483--490},
}

\bib{Kadison}{article}{
   author={Kadison, R.V.},
   title={A representation theory for commutative topological algebra},
   journal={Mem. Amer. Math. Soc.},
   volume={7},
   date={1951},
   pages={39},
}

\bibitem{Kleski_email} C. Kleski, private communication, May 8, 2013.

\bib{MuhlySolel}{article}{
   author={Muhly, P.S.},
   author={Solel, B.},
   title={An algebraic characterization of boundary representations},
   conference={
      title={Nonselfadjoint operator algebras, operator theory, and related topics},
   },
   book={
      series={Oper. Theory Adv. Appl.},
      volume={104},
      publisher={Birkh\"{a}user, Basel},
   },
   date={1998},
   pages={189--196},
}

\bib{Paulsen}{book}{
   author={Paulsen, V.I.},
   title={Completely bounded maps and operator algebras},
   series={Cambridge Studies in Advanced Mathematics},
   volume={78},
   publisher={Cambridge University Press, Cambridge},
   date={2002},
}


\bib{PR16}{book}{
  author= {Paulsen, V.I.},
  author= {Raghupathi, M.},
  title= {An introduction to the theory of reproducing kernel {H}ilbert spaces},
  publisher= {Cambridge University Press},
  year= {2016},
  volume= {152},
}

\bib{Phelps}{book}{
   author={Phelps, R.R.},
   title={Lectures on Choquet's theorem},
   series={Lecture Notes in Mathematics},
   volume={1757},
   edition={2},
   publisher={Springer-Verlag, Berlin},
   date={2001},
}

\bib{Popescu}{article}{
   author={Popescu, G.},
   title={Isometric dilations for infinite sequences of noncommuting operators},
   journal={Trans. Amer. Math. Soc.},
   volume={316},
   date={1989},
   number={2},
   pages={523--536},
}


\end{thebibliography}
\end{document}